\documentclass{amsart}

\usepackage{calrsfs}
\usepackage{nicefrac}
\usepackage{url}
\usepackage{hyperref}

\newtheorem{theorem}{Theorem}
\newtheorem*{theorem*}{Theorem (Lehmann--Scheff\'e)}
\newtheorem{corollary}[theorem]{Corollary}
\newtheorem{proposition}[theorem]{Proposition}

\theoremstyle{definition}

\newtheorem{remark}[theorem]{Remark}

\theoremstyle{definition} 
\newtheorem*{remark*}{Remark}
\newtheorem{example}[theorem]{Example}

\newcommand{\R}{\mathbb{R}}

\newcommand{\X}{X}

\newcommand{\A}{\Sigma}
\newcommand{\EE}{\mathcal{E}}
\newcommand{\LL}{V}
\newcommand{\PP}{\mathcal{P}}

\renewcommand{\l}{\mathfrak{L}}
\renewcommand{\ll}{\mathfrak{c}}
\newcommand{\sq}{\operatorname{\mathfrak{s}\mathfrak{q}}}
\newcommand{\C}{\mathcal{C}}

\newcommand{\al}{\alpha}
\newcommand{\be}{\beta}
\newcommand{\Ga}{\Gamma}
\newcommand{\la}{\lambda}
\newcommand{\La}{\Lambda}

\newcommand{\Si}{\Sigma}
\renewcommand{\th}{\theta}
\newcommand{\Th}{\Theta}
\newcommand{\E}{\operatorname{\mathsf{E}}}
\renewcommand{\P}{\operatorname{\mathsf{P}}}
\newcommand{\Var}{\operatorname{\mathsf{Var}}}
\newcommand{\Cov}{\operatorname{\mathsf{Cov}}}

\newcommand{\ii}{\operatorname{I}}

\begin{document}

\title[Best unbiased estimators]{A characterization of best unbiased estimators}

\author{Iosif Pinelis}
\address{Department of Mathematical Sciences, Michigan Technological University}
\email{ipinelis@mtu.edu}

\subjclass[2010]{
62F10, 62G05, 62B05
}


\keywords{Uniformly minimum variance unbiased estimator, best unbiased estimator, 
complete statistic}

\begin{abstract} 
A simple characterization of uniformly minimum variance unbiased estimators (UMVUEs) is provided (in the case when the sample space is finite) in terms of a linear independence condition on the likelihood functions corresponding to the possible samples.  
The crucial observation in the proof is that, if a UMVUE exists, then, 
after an appropriate cleaning of the parameter space, 
the nonzero likelihood functions  
are eigenvectors of an ``artificial'' matrix of Lagrange multipliers, and the values of the UMVUE are eigenvalues of that matrix. The characterization is then extended to best unbiased estimators with respect to arbitrary convex loss functions. 
\end{abstract}

\maketitle

Let $(\X,\A)$ be a measurable space, so that $\X$ is a set and $\A$ is a sigma-algebra of subsets of $\X$. The set $\X$ is to be interpreted as the set of all possible statistical samples and will be assumed nonempty. Any mapping of $\X$ to $\R$ that is measurable with respect to the sigma-algebra $\A$ over $\X$ and the Borel sigma-algebra over $\R$ is called a (real-valued) statistic or, equivalently, a (real-valued) estimator.  

Let $\PP:=(\P_\th)_{\th\in\Th}$ be a family of probability measures on $\A$, 
where $\Th$ is a nonempty 
set, called the parameter space. 
The triple $(\X,\A,\PP)$ is called a statistical model. 
Let us say that a set $N\in\A$ is a null set (for the model) if $\P_\th(N)=0$ for all $\th\in\Th$.  

For each $\th\in\Th$, let $\E_\th$ denote the expectation with respect to the probability measure $\P_\th$. For $j=1,2$, let $L^j=L^j(\X,\A,\PP)$ stand for the set of all statistics $T$ such that $\E_\th|T|^j<\infty$ for all $\th\in\Th$. 

Let $b$ be any function from $\Th$ to $\R$. A statistic $T\in L^1$ is called unbiased for the function $b$ if 
\begin{equation}\label{eq:ET=b}
	\text{$\E_\th T=b(\th)$ for all $\th\in\Th$;}
\end{equation} 
on the other hand, for a given statistic $T$, the function $b$ satisfying \eqref{eq:ET=b} may be called the expectation function of $T$. 
Let $\EE_b$ denote the set of all unbiased estimators of the function $b$. 
In particular, $\EE_0$ will denote the set of all unbiased estimators of the zero function. 

A statistic $T\in L^2$ is called a uniformly minimum variance unbiased estimator (UMVUE) of the function $b$ if (i) $T\in\EE_b$ and (ii) for any $\tilde T\in\EE_b$ 
and all $\th\in\Th$ one has $\Var_\th T\le\Var_\th\tilde T$ or, equivalently, $\E_\th T^2\le\E_\th{\tilde T}^2$. 
If $T$ is a UMVUE of some function $b$, let us say simply say that $T$ is a UMVUE. 

Let us say that a statistic $T$ is sufficient if for any statistic $S\in L^1$ there exists a statistic $S_T$ 
that is, for each $\th\in\Th$, a version of the conditional expectation $\E_\th(S|T)$; the key here is that the statistic $S_T$ is the same for all $\th\in\Th$. This definition is slightly more convenient than, and is easily seen to be equivalent to, the usual definition of a sufficient statistic; see e.g.\ \cite[top of page~311]{lehmann-scheffeI}. 

A statistic $T$ is called complete if, for any Borel-measurable function $u$ from $\R$ to $\R$ such that $u\circ T\in\EE_0$, one has $u\circ T=0$ except on a null set. 

The Lehmann--Scheff\'e theorem 
\cite[Theorem~5.1]{lehmann-scheffeI} 
is as follows. 

\begin{theorem*}
Let $T$ be a complete sufficient statistic. Let a Borel-measurable function $u$ from $\R$ to $\R$ be such that $u\circ T\in L^1$. Then $u\circ T$ is a UMVUE. 
\end{theorem*}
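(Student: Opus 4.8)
The plan is to run the classical Rao--Blackwell argument and then close it up using completeness. Write $g:=u\circ T$ and let $b$ be its expectation function, so that $\E_\th g=b(\th)$ for all $\th\in\Th$ and hence $g\in\EE_b$. To prove that $g$ is a UMVUE of $b$, I would fix an arbitrary competitor $\tilde T\in\EE_b$ and aim to show $\E_\th g^2\le\E_\th{\tilde T}^2$ for every $\th\in\Th$; since both estimators are unbiased, this is the same as $\Var_\th g\le\Var_\th\tilde T$.

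First I would invoke sufficiency. Because $T$ is sufficient, there is a single statistic $S$ (not depending on $\th$) that is, for each $\th$, a version of the conditional expectation $\E_\th(\tilde T\mid T)$. Being a conditional expectation given $T$, this $S$ is a function of $T$, so $S=h\circ T$ for some Borel-measurable $h$ (Doob--Dynkin). The tower property then gives $\E_\th S=\E_\th\E_\th(\tilde T\mid T)=\E_\th\tilde T=b(\th)$, so that $S=h\circ T\in\EE_b$ as well. The Rao--Blackwell step is the conditional Jensen inequality applied to the convex map $x\mapsto x^2$: it yields $\E_\th S^2=\E_\th\big(\E_\th(\tilde T\mid T)\big)^2\le\E_\th\E_\th(\tilde T^2\mid T)=\E_\th{\tilde T}^2$ for each $\th$, i.e.\ conditioning on $T$ does not increase the second moment.

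The final and decisive step is completeness, which is what ties the improved estimator back to the prescribed $g$. Both $g=u\circ T$ and $S=h\circ T$ are unbiased for $b$, so $(u-h)\circ T=g-S\in\EE_0$. Completeness of $T$ forces $(u-h)\circ T=0$ off a null set, that is, $g=S$ except on a set that is null for every $\P_\th$. Consequently $\E_\th g^2=\E_\th S^2\le\E_\th{\tilde T}^2$ for all $\th\in\Th$, which is exactly what was required; in particular, choosing a competitor of finite variance (if one exists) shows along the way that $g\in L^2$, so that $g$ genuinely qualifies as a UMVUE, while if no unbiased estimator has finite variance the inequality holds trivially.

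I expect the main obstacle to be the measurability bookkeeping rather than the inequalities: one must make sure that the version $S$ supplied by sufficiency can be taken to be a genuine function of $T$ (so that completeness, which is a statement about statistics of the form $v\circ T$, actually applies), and one must keep the two distinct notions of ``negligible'' in sync---the $\P_\th$-a.e.\ identifications coming from conditional expectation versus the model-wide null sets appearing in the definition of completeness. Handling this carefully is precisely where the hypothesis that the same $S$ works simultaneously for all $\th$ (the paper's formulation of sufficiency) does the work.
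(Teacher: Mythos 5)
Your proof is correct, and it is the classical Rao--Blackwell-plus-completeness argument: condition the competitor on $T$ using sufficiency (so that one version works for all $\th$ at once), apply conditional Jensen to reduce the second moment, and then use completeness to identify the Rao--Blackwellized estimator with $u\circ T$ up to a model-wide null set. There is, however, nothing in the paper to compare it against: the paper states this theorem without proof, quoting it from Lehmann and Scheff\'e's original article, so your argument is in effect a reconstruction of the proof in the cited source rather than an alternative to anything in the present text. The nearest relative inside the paper is the proof of Proposition~\ref{prop:compl,suff}, which takes the Lehmann--Scheff\'e theorem as given and runs a conditioning argument in the opposite direction, showing that every UMVUE must coincide with a function of the complete sufficient statistic. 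Two details you handled that deserve to be flagged as genuinely necessary under this paper's conventions: first, the paper's definition of sufficiency supplies a single statistic $S_T$ that is a version of $\E_\th(\tilde T\mid T)$ simultaneously for every $\th$, and this uniformity is exactly what lets completeness (a statement about sets that are null for \emph{all} $\P_\th$) be applied to $(u-h)\circ T$; second, the paper's definition of UMVUE requires membership in $L^2$ even though the theorem only hypothesizes $u\circ T\in L^1$, and your observation that any finite-variance competitor forces $\E_\th (u\circ T)^2\le\E_\th\tilde T^2<\infty$ for all $\th$ correctly closes that gap (with the degenerate case, where no unbiased estimator has finite variance, being a defect of the statement itself rather than of your argument).
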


Throughout the rest of this paper, assume that the set $X$ of all samples is finite and $\A$ is the set of all subsets of $X$, unless specified otherwise. Thus, the set of all statistics will be the same as the set $\R^X$ of all functions from $X$ to $\R$. 

For each sample $x\in X$, let $\ell_x$ stand for the corresponding likelihood function, mapping $\Th$ to $\R$ and defined by the formula 
\begin{equation}
	\ell_x(\th):=\P_\th(\{x\})\quad\text{for all }\th\in\Th. 
\end{equation}

For each $t\in T(X):=\{T(x)\colon x\in X\}$, take any (linear) basis $B_t$ 
of the set $\{\ell_x\colon T(x)=t\}$ of likelihood functions; in particular, $B_t$ will be necessarily empty if the likelihood functions $\ell_x$ are zero for all $x\in X$ such that $T(x)=t$. (As usual, it is assumed here that the sum of an empty family is zero.) 

\begin{theorem}\label{prop:iff}
A statistic $T$ is a UMVUE iff 
the union $\bigcup_{t\in T(X)}B_t$ of the bases is linearly independent. 
\end{theorem}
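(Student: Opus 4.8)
The plan is to characterize UMVUEs through the structure of $\EE_0$, the space of unbiased estimators of zero. The classical criterion (Rao--Blackwell--Lehmann--Scheff\'e lore) states that $T\in\EE_b$ is a UMVUE iff $\Cov_\th(T,U)=0$ for all $U\in\EE_0$ and all $\th\in\Th$, equivalently $\E_\th(TU)=0$ for all such $U$. Since the sample space $X$ is finite, every statistic lies in $L^2$, and $\E_\th(TU)=\sum_{x\in X}T(x)U(x)\ell_x(\th)$. So I would first reformulate both the UMVUE condition and the membership conditions purely as linear-algebraic relations among the likelihood functions $\ell_x\in\R^\Th$.

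\medskip

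The key observation is that $U\in\EE_0$ means $\sum_{x}U(x)\ell_x=0$ as a function on $\Th$, i.e.\ the coefficient vector $(U(x))_{x\in X}$ is a linear dependence relation among the $\ell_x$. Thus $\EE_0$ is precisely the space of linear dependencies among the family $(\ell_x)_{x\in X}$. The orthogonality condition $\E_\th(TU)=0$ for all $\th$ says that $\sum_x T(x)U(x)\ell_x=0$, meaning that the vector $(T(x)U(x))_x$ is \emph{also} a dependence relation whenever $(U(x))_x$ is. I would then show that $T$ being a UMVUE is equivalent to the statement that multiplication by the values $T(x)$ maps $\EE_0$ into itself; geometrically, $T$ acts as a ``diagonal operator'' preserving the dependency space, which forces the $T(x)$ to behave like eigenvalues on each block. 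This connects directly to the eigenvector picture promised in the abstract.

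\medskip

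To finish, I would translate the stability-of-$\EE_0$ condition into the basis language of the statement. Grouping samples by the level sets $\{x:T(x)=t\}$, a dependency that mixes two different levels $t\ne t'$ gets mapped by the diagonal action to a \emph{different} dependency (since the $t$ and $t'$ blocks are scaled by distinct factors), and the difference of these two must again be a dependency. I expect the main technical content to be proving the equivalence: the union $\bigcup_t B_t$ is linearly independent iff there is no dependency relation mixing distinct level sets in a way incompatible with the diagonal scaling. Concretely, linear independence of the union means every dependency among all the $\ell_x$ is forced to live within a single level set $\{x:T(x)=t\}$; and a dependency supported on a single level set is automatically preserved (scaled by the single factor $t$) under the diagonal action, so the stability condition holds. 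Conversely, a dependency genuinely mixing two levels produces, via the diagonal action, a second independent dependency that certifies $U\in\EE_0$ with $\Cov_\th(T,U)\ne0$ for some $\th$, breaking the UMVUE property.

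\medskip

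The hard part will be the clean handling of the ``cleaning of the parameter space'' alluded to in the abstract and the passage between the abstract eigenvector/Lagrange-multiplier viewpoint and the elementary basis-independence criterion; in particular I must be careful that choosing an arbitrary basis $B_t$ for each level set is legitimate, i.e.\ that linear independence of $\bigcup_t B_t$ does not depend on the choice of bases (which is clear, since each $B_t$ spans the same subspace $\operatorname{span}\{\ell_x:T(x)=t\}$, so the span of the union is basis-independent and the independence condition is really a condition on whether these subspaces form a direct sum). I would state and use this direct-sum reformulation as the pivot between the two directions of the iff.
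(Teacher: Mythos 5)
Your strategy is genuinely different from the paper's. The paper fixes $\th$, treats a UMVUE as a solution of a quadratic minimization under the unbiasedness constraints, extracts the eigenvalue equation $\La\ell_x=t_x\ell_x$ from the Lagrange multiplier rule, and gets the converse from the sufficiency of the multiplier rule for convex problems. You instead work from the orthogonality criterion --- the paper's implication \eqref{eq:implies}, which it records as well known --- and try to match stability of $\EE_0$ under pointwise multiplication by $T$ against the basis condition. Two things in your sketch are right and worth keeping. First, the reformulation of the basis condition as the statement that the subspaces $V_t:=\operatorname{span}\{\ell_x\colon T(x)=t\}$ form a direct sum; this is basis-independent, as you say, and it is in fact the correct reading of the theorem when distinct level sets share a likelihood function. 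Second, the ``if'' half: if the sum is direct, then every $H\in\EE_0$ splits into dependencies each supported on a single level set, the diagonal action merely rescales each piece, so $TH\in\EE_0$ by linearity of $\EE_0$, and $T$ is a UMVUE by the criterion. Note also that your route needs no cleaning of $\Th$ at all; the paper needs it only as a constraint qualification for the multiplier rule.

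The gap is in the ``only if'' half. You assert that a dependency $U$ genuinely mixing distinct levels is sent by the diagonal action to a non-dependency, i.e.\ that $TU\notin\EE_0$, whence $\Cov_\th(T,U)\ne0$ for some $\th$. With three or more levels this is false: the two conditions $\sum_t v_t=0$ and $\sum_t t\,v_t=0$, where $v_t:=\sum_{x\colon T(x)=t}U(x)\ell_x$, can hold simultaneously with all $v_t\ne 0$. Concretely, let $\Th=\{1,2\}$, $X=\{1,2,3,4\}$, $T(x)=x$, and
\begin{equation*}
P=\begin{bmatrix} \nicefrac{1}{6} & \nicefrac{1}{6} & \nicefrac{1}{6} & \nicefrac{1}{2} \\ \nicefrac{1}{4} & \nicefrac{1}{4} & \nicefrac{1}{4} & \nicefrac{1}{4} \end{bmatrix}.
\end{equation*}
Then $U=(1,-2,1,0)\in\EE_0$ mixes the levels $1,2,3$, yet $TU=(1,-4,3,0)\in\EE_0$ as well, so this witness proves nothing --- even though $T$ is indeed not a UMVUE here (the witness $U'=(1,-1,0,0)$ works: $TU'=(1,-2,0,0)\notin\EE_0$). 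The standard repair is exactly Bahadur's repeated-application idea quoted in the paper: if $T$ is a UMVUE, then iterating \eqref{eq:implies} gives $T^kH\in\EE_0$, i.e.\ $\sum_t t^k v_t=0$, for all $k\ge0$; since the values $t\in T(X)$ are finitely many and pairwise distinct, the relevant Vandermonde matrix is invertible, so every block $v_t$ vanishes, and then the linear independence of each $B_t$ annihilates all coefficients of any putative dependency in $\bigcup_t B_t$. (Alternatively, a single application of the diagonal action does suffice if you choose the witness $U$ so that the number of nonzero blocks $v_t$ is minimal.) Incidentally, the example above also shows that the union in Theorem~\ref{prop:iff} must be taken with multiplicity --- here $B_1=B_2=B_3=\{\ell_1\}$, so the set-theoretic union $\{\ell_1,\ell_4\}$ is linearly independent while $T$ is not a UMVUE --- which is precisely what your direct-sum formulation handles correctly. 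With the iteration step supplied, your proof is complete, and it answers affirmatively the question raised at the end of the paper of whether Bahadur's method can yield Theorem~\ref{prop:iff}.
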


Theorem~\ref{prop:iff} will be proved at the end of this note. 

Another characterization of UMVUEs was provided by Theorem~5 of Bahadur~\cite{bahadur57}, which implies that there is a sigma-algebra $\Si_0$ over $X$ such that a statistic $T$ is a UMVUE iff $T$ is $\Si_0$-measurable. The sigma-algebra $\Si_0$ can be described (see e.g.\ \cite{schmett-strasser}) as the set of all subsets of $X$ whose indicator is a UMVUE. 
It appears that the necessary and sufficient linear-independence condition given in Theorem~\ref{prop:iff} above is more explicit and easier to check than the $\Si_0$-measurability condition. On the other hand, Bahadur's characterization of UMVUEs holds not only for finite sets $X$ of all samples. 

Consider the matrix $P:=[\P_\th(\{x\})\colon\th\in\Th,x\in\X]$, so that the rows and columns of $P$ represent, respectively, the probability mass functions (say $P_\th$) of the probability measures $\P_\th$ on $X$ (for $\th\in\Th$) and 
the likelihood functions $\ell_x$ (for $x\in X$). 

\begin{example}
Suppose that $X=\{1,2,3,4\}$ 
and $\Th=\{1,2\}$. 

If $P=P_1:=\begin{bmatrix}
 \nicefrac{1}{3} & \nicefrac{1}{3} & \nicefrac{1}{3} & 0 \\
 \nicefrac{1}{6} & \nicefrac{1}{3} & \nicefrac{1}{2} & 0 \\
\end{bmatrix}$, 
then any two of the first three columns of $P$ are linearly independent, but of course no three columns of $P$ are so. It follows immediately by Theorem~\ref{prop:iff} that a statistic $T$ is a UMVUE here iff $T(1)=T(2)=T(3)$. It then follows that here the sigma-algebra $\Si_0$ is generated by the set $\{1,2,3\}$, so that 
\begin{equation}\label{eq:Si_0}
	\Si_0=\big\{\emptyset,\{1,2,3\},\{4\},\{1,2,3,4\}\big\}. 
\end{equation}

Alternatively, one can find the sigma-algebra $\Si_0$ by using its mentioned description as the set of all subsets of $X$ whose indicator is a UMVUE. It is well-known and easy to see that a statistic $T$ is a UMVUE iff for any statistic $H$ one has the implication 
\begin{equation}\label{eq:implies}
\text{$H\in\EE_0\implies TH\in\EE_0$.} 	
\end{equation}
Next, the condition $H\in\EE_0$ means that  
the function $H$ from $X=\{1,2,3,4\}$ to $\R$, identified with the row $[H(1),\dots,H(4)]$, is in the orthogonal complement, say $O$, of the row space of the matrix $P$ to the set $\R^X$ of all statistics with respect to the usual inner product, defined by the formula $T\cdot S:=T(1)S(1)+\dots+T(4)S(4)$. 
One finds that, for $P=P_1$, the orthogonal complement $O$ is the linear span of two rows, say $H_1:=[1,-2,1,0]$ and $H_2:=[1,-2,1,1]$. Thus, the indicator $\ii_A$ of a subset $A$ of $X=\{1,2,3,4\}$ is a UMVUE iff $(H_j\ii_A)\cdot P_\th=0$ for $j\in\{1,2\}$ and $\th\in\Th=\{1,2\}$. One can then check that the set of all such subsets $A$ is the same as the sigma-algebra $\Si_0$ in \eqref{eq:Si_0}.  

If $P=P_2:=\begin{bmatrix}
 \nicefrac{1}{2} & \nicefrac{1}{4} & \nicefrac{1}{4} & 0 \\
 \nicefrac23 & \nicefrac{1}{3} & 0 & 0 \\
\end{bmatrix}$, 
then the first two columns of $P$ are linearly dependent, whereas their linear span is linearly independent of the third column. 
It follows immediately by Theorem~\ref{prop:iff} that a statistic $T$ is a UMVUE here iff $T(1)=T(2)$. It then follows that here the sigma-algebra $\Si_0$ is generated by the sets $\{1,2\}$, $\{3\}$, $\{4\}$, so that 
$
	\Si_0=\big\{\emptyset,\{3\},\{4\},\{1,2\},\{3,4\},\{1,2,3\},\{1,2,4\},\{1,2,3,4\}\big\}. 
$ 
Similarly to the case $P=P_1$, one can check the latter sigma-algebra $\Si_0$ coincides with the set of all subsets of $X$ whose indicator is a UMVUE. 

At least for these two examples, with $P=P_1$ and $P=P_2$, it appears that indeed the necessary and sufficient linear-independence condition given in Theorem~\ref{prop:iff} is more explicit and easier to check than the $\Si_0$-measurability condition. However, it also appears that there is a duality between these two necessary-and-sufficient conditions, one in terms of the linear independence of some of the columns of the matrix $P$ and the other one expressible in terms of the rows of $P$. 
\end{example}

\begin{corollary}\label{cor:u(T)}
If a statistic $T$ is a UMVUE, then $u\circ T$ is so for any function $u\colon\R\to\R$. 
\end{corollary}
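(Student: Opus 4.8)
The plan is to deduce the corollary directly from Theorem~\ref{prop:iff}, by choosing the bases for $u\circ T$ to be suitable unions of the bases for $T$. Write $S:=u\circ T$, and note that $S(X)=\{u(t)\colon t\in T(X)\}$. For each value $s\in S(X)$, the corresponding fiber decomposes as $\{x\in X\colon S(x)=s\}=\bigcup_{t\in T(X)\colon u(t)=s}\{x\in X\colon T(x)=t\}$, so that the associated set of likelihood functions is
\begin{equation*}
\{\ell_x\colon S(x)=s\}=\bigcup_{t\in T(X)\colon u(t)=s}\{\ell_x\colon T(x)=t\}.
\end{equation*}

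First I would invoke the hypothesis: since $T$ is a UMVUE, Theorem~\ref{prop:iff} gives that the union $U:=\bigcup_{t\in T(X)}B_t$ of the bases is linearly independent. Next, for each $s\in S(X)$ I would take the candidate basis $B'_s:=\bigcup_{t\in T(X)\colon u(t)=s}B_t$ and check that it is indeed a basis of $\{\ell_x\colon S(x)=s\}$. It spans the right space because each $B_t$ spans $\{\ell_x\colon T(x)=t\}$ and the displayed decomposition turns a union of spanning sets into a spanning set of the union; and it is linearly independent because $B'_s\subseteq U$ and any subset of the linearly independent set $U$ is linearly independent. Thus $B'_s$ is a legitimate choice of the basis associated with $S$ and the value $s$ in the sense of Theorem~\ref{prop:iff}.

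Finally, since the map $t\mapsto u(t)$ partitions $T(X)$ into the fibers $\{t\in T(X)\colon u(t)=s\}$ over $s\in S(X)$, the corresponding union of bases for $S$ regroups back to $U$:
\begin{equation*}
\bigcup_{s\in S(X)}B'_s=\bigcup_{s\in S(X)}\;\bigcup_{t\in T(X)\colon u(t)=s}B_t=\bigcup_{t\in T(X)}B_t=U,
\end{equation*}
which is linearly independent. Applying the ``if'' direction of Theorem~\ref{prop:iff} to $S$ then yields that $S=u\circ T$ is a UMVUE, as claimed.

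The only point requiring care is the verification that each $B'_s$ is a genuine basis, i.e.\ that grouping the $B_t$'s by the common value of $u$ produces bases of the larger fibers. This rests on the insensitivity of the criterion in Theorem~\ref{prop:iff} to the choice of bases, equivalently on the standard linear-algebra fact that if the subspaces $\operatorname{span}B_t$ (for $t\in T(X)$) are in direct-sum position, then so are any sums of them obtained by grouping. I expect this to be the main, though modest, obstacle, and I would make the direct-sum reformulation explicit so as to handle the spanning-and-independence check cleanly (noting that the degenerate case of an empty $B_t$, corresponding to all-zero likelihood functions on a fiber, is absorbed automatically since empty sets contribute nothing to the unions).
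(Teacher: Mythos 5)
Your proof is correct and follows essentially the same route as the paper's: the paper likewise deduces the corollary from Theorem~\ref{prop:iff} by observing that, for each $s\in(u\circ T)(X)$, a basis of $\{\ell_x\colon(u\circ T)(x)=s\}$ can be found inside $\bigcup\{B_t\colon t\in T(X),\,u(t)=s\}$, so that the union of these bases over $s$ lies inside the linearly independent set $\bigcup_{t\in T(X)}B_t$ and the ``if'' direction of Theorem~\ref{prop:iff} applies. The only difference is cosmetic: you verify that the grouped union $B'_s$ is itself a basis (using that any subset of a linearly independent set is linearly independent), whereas the paper merely extracts some basis contained in that union.
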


\begin{proof}[Proof of Corollary~\ref{cor:u(T)}] 
This follows immediately from Theorem~\ref{prop:iff} because, for any $s\in(u\circ T)(X)$, there is a basis of the set $\{\ell_x\colon(u\circ T)(x)=s\}$ contained in the union $\bigcup\{B_t\colon t\in T(X), u(t)=s\}$. 
\end{proof}

\begin{corollary}\label{cor:complete}
Suppose that a statistic $T$ is a UMVUE. Then $T$ is complete. 
\end{corollary}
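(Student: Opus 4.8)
The plan is to reduce completeness to Corollary~\ref{cor:u(T)} by pitting $u\circ T$ against the trivial estimator $0$. Fix a Borel-measurable $u\colon\R\to\R$ with $u\circ T\in\EE_0$; since $X$ is finite, every statistic is bounded and hence lies in $L^2$, so no integrability issue arises. Unwinding the definitions, the conclusion ``$u\circ T=0$ except on a null set'' amounts, in this finite setting, to the pointwise statement that $u(T(x))=0$ for every $x\in X$ whose singleton $\{x\}$ is not a null set, i.e.\ for every $x$ with $\ell_x$ not identically $0$.

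The first step is to note that $u\circ T$ is itself a UMVUE: this is precisely Corollary~\ref{cor:u(T)} applied to the UMVUE $T$ and the function $u$. As $u\circ T\in\EE_0$, its expectation function is the zero function, so $u\circ T$ is a UMVUE of the zero function. The crux is then to use the constant statistic $0$ as a competitor: since $0\in\EE_0$, the second-moment form of the UMVUE inequality (from the definition of UMVUE, with $b=0$ and $\tilde T=0$) yields $\E_\th(u\circ T)^2\le\E_\th 0^2=0$ for every $\th\in\Th$, whence $\E_\th(u\circ T)^2=0$ for all $\th$.

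It remains to convert this into the null-set conclusion. Expanding the second moment over the finite sample space gives $\E_\th(u\circ T)^2=\sum_{x\in X}u(T(x))^2\,\ell_x(\th)$, a sum of nonnegative terms; its vanishing for every $\th$ forces $u(T(x))^2\,\ell_x(\th)=0$ for all $x$ and $\th$. Hence $u(T(x))\ne0$ implies $\ell_x\equiv0$, so that $\{x\in X\colon u(T(x))\ne0\}$ is a finite union of null singletons and is therefore a null set; this is exactly $u\circ T=0$ except on a null set. The argument is short once Corollary~\ref{cor:u(T)} is available, and the only genuinely substantive move is recognizing that an unbiased estimator of zero that is also a UMVUE is squeezed to zero second moment by the trivial competitor $0$; passing from the vanishing of all these second moments to the pointwise null-set statement is routine, relying only on the finiteness of $X$ and the nonnegativity of $u(T(x))^2\,\ell_x(\th)$.
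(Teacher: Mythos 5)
Your proof is correct and follows essentially the same route as the paper's own proof: apply Corollary~\ref{cor:u(T)} to see that $u\circ T$ is a UMVUE of the zero function, pit it against the zero statistic to force $\E_\th(u\circ T)^2=0$ for all $\th$, and conclude that $u\circ T$ vanishes off a null set. Your final paragraph merely spells out in coordinates the last implication, which the paper treats as immediate.
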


\begin{proof}[Proof of Corollary~\ref{cor:complete}] 
Take any function $u\colon\R\to\R$ such that $u\circ T\in\EE_0$. Then, by Corollary~\ref{cor:u(T)}, $u\circ T$ is a UMVUE of the zero function. On the other hand, the zero statistic is clearly a UMVUE of the zero function. So, $\E_\th(u\circ T)^2=\Var_\th(u\circ T)=\Var_\th 0=0$ for all $\th$, whence $u\circ T=0$ except on a null set.  
Thus, $T$ is complete. 
\end{proof}

Another relation of the necessary-and-sufficient linear independence condition in Theorem~\ref{prop:iff} with the completeness is presented in the following corollary.  

\begin{corollary}\label{cor:UMVUE,compl}
Suppose that a statistic $T$ is such that for each $t\in T(X)$ the basis $B_t$ is a singleton set. 
Then $T$ is a UMVUE iff $T$ is complete. 
\end{corollary}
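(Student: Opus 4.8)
The ``only if'' part requires no work beyond what is already available: if $T$ is a UMVUE, then $T$ is complete by Corollary~\ref{cor:complete}, and that implication uses no assumption on the bases $B_t$. So the entire content of the corollary is the ``if'' part, and the plan is to prove it via Theorem~\ref{prop:iff}. Writing $B_t=\{\be_t\}$ for each $t\in T(X)$, it suffices to show that, when $T$ is complete, the set $\{\be_t\colon t\in T(X)\}=\bigcup_{t\in T(X)}B_t$ is linearly independent.

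The bridge I would use between completeness and this linear-independence condition is the family of fiber sums $L_t:=\sum_{x\colon T(x)=t}\ell_x$, one function on $\Th$ for each $t\in T(X)$. First I would record two consequences of the singleton hypothesis. Since $L_t$ lies in $\mathrm{span}\{\ell_x\colon T(x)=t\}$, which is spanned by $\be_t$, we have $L_t=c_t\be_t$ for a unique scalar $c_t$; and since each $\ell_x$ is nonnegative with at least one of the $\ell_x$ over the fiber nonzero (otherwise $B_t$ would be empty rather than a singleton), the sum $L_t$ is nonzero, so $c_t\ne0$. Hence $\{\be_t\}$ is linearly independent if and only if $\{L_t\}$ is. Next, for any function $u\colon\R\to\R$, grouping $\E_\th(u\circ T)=\sum_{x\in X}u(T(x))\,\ell_x(\th)$ according to the value $t=T(x)$ gives $\E_\th(u\circ T)=\sum_{t\in T(X)}u(t)\,L_t(\th)$; thus $u\circ T\in\EE_0$ is precisely the relation $\sum_t u(t)L_t=0$.

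With these in hand the argument runs by contraposition. Suppose $\{L_t\}$ is linearly dependent, say $\sum_t u(t)L_t=0$ with $u(t_0)\ne0$ for some $t_0$; extending $u$ by $0$ off the finite set $T(X)$ keeps it Borel, and now $u\circ T\in\EE_0$. Choosing $x_0$ with $T(x_0)=t_0$ and $\ell_{x_0}\ne0$ (possible because $B_{t_0}$ is nonempty), the singleton $\{x_0\}$ is not a null set, while $(u\circ T)(x_0)=u(t_0)\ne0$; this contradicts completeness of $T$. Therefore completeness forces $\{L_t\}$, hence $\{\be_t\}$, to be linearly independent, and Theorem~\ref{prop:iff} gives that $T$ is a UMVUE.

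The proof is mostly careful bookkeeping, and the step I expect to be the crux is pinning down exactly where the singleton hypothesis enters. It is used in two places: to guarantee that the fiber sum $L_t$ is a \emph{nonzero} multiple of the single basis vector $\be_t$, so that linear independence of the $\be_t$ and of the $L_t$ coincide; and, through nonemptiness of each singleton $B_t$, to guarantee that every fiber contains a non-null sample, which is what upgrades ``$u\circ T$ vanishes off a null set'' to ``$u$ vanishes on all of $T(X)$.'' Without the singleton assumption completeness still controls the single combination $L_t$ on each fiber but cannot see dependencies among several basis vectors sharing one fiber, so the equivalence with being a UMVUE need not persist.
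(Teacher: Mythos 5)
Your proof is correct and takes essentially the same route as the paper: the paper disposes of this corollary with the single line that it ``follows immediately from Theorem~\ref{prop:iff}'', and your argument is exactly the fleshing-out of that reduction --- the ``only if'' half via Corollary~\ref{cor:complete}, and the ``if'' half by translating completeness into linear independence of the fiber sums $L_t$, which under the singleton hypothesis are nonzero multiples of the basis vectors $\be_t$. The details you supply (nonnegativity of the likelihoods forcing $c_t\ne 0$, and the existence of a non-null sample in each fiber) are precisely the ones the paper leaves to the reader, and they are all sound.
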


This follows immediately from Theorem~\ref{prop:iff}. 

In the case when a complete sufficient statistic exists, the UMVUEs can be easily characterized: 

\begin{proposition}\label{prop:compl,suff}
Suppose that a statistic $S$ is sufficient and complete. Then a statistic $T$ is a UMVUE iff for some function $u\colon\R\to\R$ one has $T=u\circ S$ except on a null set. 
\end{proposition}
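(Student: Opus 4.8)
The plan is to prove the two implications separately. For the ``if'' direction, suppose $T=u\circ S$ except on a null set for some $u\colon\R\to\R$. Since $X$ is finite, every statistic lies in $L^2\subseteq L^1$, so in particular $u\circ S\in L^1$; as $S$ is complete and sufficient, the Lehmann--Scheff\'e theorem immediately yields that $u\circ S$ is a UMVUE. It then suffices to observe that modifying a statistic on a null set alters neither its expectation function nor any variance $\Var_\th$: thus $T$, which agrees with $u\circ S$ off a null set, lies in the same class $\EE_b$ and has the same variances, and is therefore a UMVUE as well. (Alternatively, $S$ itself is a UMVUE by Lehmann--Scheff\'e applied to the identity function, and Corollary~\ref{cor:u(T)} then gives that $u\circ S$ is a UMVUE.)

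For the ``only if'' direction, suppose $T$ is a UMVUE of a function $b$. Using the sufficiency of $S$, let $T_S$ be a statistic that is, simultaneously for every $\th$, a version of $\E_\th(T\mid S)$; being constant on the level sets of $S$, it has the form $T_S=u\circ S$ for some $u\colon\R\to\R$. The law of total expectation gives $\E_\th T_S=\E_\th T=b(\th)$, so $T_S\in\EE_b$, and the Rao--Blackwell inequality gives $\Var_\th T_S\le\Var_\th T$ for all $\th$. Since $T$ is a UMVUE and $T_S\in\EE_b$, the reverse inequality $\Var_\th T\le\Var_\th T_S$ holds as well, so $\Var_\th T=\Var_\th T_S$ for every $\th$.

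The main step, and the only one requiring care, is to upgrade this equality of variances to the pointwise identity $T=T_S$ off a null set. For this I would compute $\E_\th(T-T_S)^2=\E_\th T^2-\E_\th T_S^2=\Var_\th T-\Var_\th T_S=0$ for every $\th$, where the first equality uses $\E_\th(T\,T_S)=\E_\th T_S^2$ (pull the $\sigma(S)$-measurable factor $T_S$ out of the conditional expectation, using $\E_\th(T\mid S)=T_S$) and the second uses $\E_\th T=\E_\th T_S$. Hence $T=T_S$ holds $\P_\th$-almost surely for each $\th$; since $X$ is finite, the set $\{x\colon T(x)\ne T_S(x)\}$ has $\P_\th$-measure zero for all $\th$, i.e.\ is a null set, so $T=u\circ S$ except on a null set, as desired. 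It is worth noting that completeness of $S$ enters only through Lehmann--Scheff\'e in the ``if'' direction, whereas the ``only if'' direction uses sufficiency alone.
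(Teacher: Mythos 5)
Your proof is correct, and its overall skeleton matches the paper's: the ``if'' part is the Lehmann--Scheff\'e theorem (you are in fact slightly more careful than the paper, noting that altering a statistic on a null set changes neither its expectation function nor its variances, and that finiteness of $X$ gives integrability for free), and the ``only if'' part Rao--Blackwellizes $T$ to $T_S=\E_\th(T|S)=u\circ S$, using sufficiency to obtain a single version valid for all $\th$ and not using completeness at all --- exactly as in the paper. Where you genuinely differ is the crucial uniqueness step, upgrading equality of variances to $T=T_S$ off a null set. You use the Pythagorean (orthogonality) identity: pulling the $\sigma(S)$-measurable factor $T_S$ out of the conditional expectation gives $\E_\th(T\,T_S)=\E_\th T_S^2$, whence $\E_\th(T-T_S)^2=\Var_\th T-\Var_\th T_S=0$ for every $\th$. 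The paper instead forms the midpoint estimator $U:=(T+T_S)/2\in\EE_b$ and argues via the Cauchy--Schwarz inequality that $\Var_\th U<\Var_\th T_S=\Var_\th T$ for some $\th$ unless $T_S=T$ except on a null set, contradicting the optimality of $T$. Both mechanisms are short and correct; yours is the standard $L^2$-projection argument and in fact needs only the one-sided inequality $\Var_\th T\le\Var_\th T_S$ (since $\E_\th(T-T_S)^2\ge0$ forces the reverse), whereas the paper's midpoint-plus-strict-convexity trick is the one that adapts beyond quadratic loss to best unbiased estimation under strictly convex loss functions, which is the theme of Proposition~\ref{prop:univ}; the Pythagorean identity, by contrast, is special to the quadratic case.
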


\begin{proof}[Proof of Proposition~\ref{prop:compl,suff}] Given the Lehmann--Scheff\'e and Rao--Blackwell theorems, this proof is easy and presented here for readers' convenience. Indeed, the ``if'' part of Proposition~\ref{prop:compl,suff} is the Lehmann--Scheff\'e theorem itself. 

To prove the ``only if'' part, take any UMVUE $T$. Then the statistics $T_S:=\E_\th(T|S)$ and $U:=(T+T_S)/2$ are also UMVUE, and $\E_\th T_S=\E_\th U=\E_\th T$ for all $\th\in\Th$. However, by the Cauchy--Schwarz inequality, $\Var_\th U=\big(\Var_\th T_S+\Var_\th T+2\Cov_\th(T_S,T)\big)/4<\Var_\th T_S=\Var_\th T$ for some $\th\in\Th$ unless $T_S=T$ (except on a null set). It remains to note that for some function $u\colon\R\to\R$ one has $T_S=\E_\th(T|S)=u\circ S$ except on a null set. 
\end{proof}

Let us now present two more examples illustrating Theorem~\ref{prop:iff}. 

\begin{example}\label{ex:bern} (Bernoulli trials)  
Suppose that $X=\{0,1\}^n$ for some natural $n$, $\Th=(0,1)$, and $T(x):=\sum_{i=1}^n x_i$ and $\P_\th(\{x\})=\th^{T(x)}(1-\th)^{n-T(x)}$ for $x=(x_1,\dots,x_n)\in X$. 
Clearly, here for each $t\in T(X)$ the basis $B_t$ is a singleton set.
Also, the statistic $T$ here is sufficient and complete. So, it follows immediately either from Theorem~\ref{prop:iff} (cf.\ Corollaries~\ref{cor:u(T)} and \ref{cor:UMVUE,compl}) or from Proposition~\ref{prop:compl,suff} that a statistic $S$ is a UMVUE here iff for some function $u\colon\R\to\R$ one has $S=u\circ T$.   
\end{example}

\begin{example}\label{ex:beta-bern} (Beta-Bernoulli trials) Consider the following hierarchical model of $n$ independent trials, where the success probability in each trial is a random number $p$ sampled from a Beta distribution  ($p$ is sampled just once, before the trials begin); this is commonly used to model over-dispersion. Fix any positive real number $c$, which can be thought of as somewhat large.  
Suppose that $X=\{0,1\}^n$ for some natural $n$, $\Th=(-c,c)$, and $T(x):=\sum_{i=1}^n x_i$ and 
\begin{equation*}
	\begin{aligned}
	\P_\th(\{x\})=&\int_0^1 p^{T(x)}(1-p)^{n-T(x)}\,f_{c+\th,c-\th}(p)\,dp \\ 
=&\frac{\Ga(2c)}{\Ga(2c+n)}\,\frac{\Ga(c+\th+T(x))}{\Ga(c+\th)}\,\frac{\Ga(c-\th+n-T(x))}{\Ga(c-\th)}
	\end{aligned}
\end{equation*}
for $x=(x_1,\dots,x_n)\in X$, where $f_{\al,\be}$ is the probability density function of the Beta distribution with positive real parameters $\al$ and $\be$, so that $$f_{\al,\be}(p)=\frac{\Ga(\al+\be)}{\Ga(\al)\Ga(\be)}\,p^{\al-1}(1-p)^{\be-1}$$
for $p\in(0,1)$.  
One may note that then, with $(\al,\be)=(c+\th,c-\th)$, (i) $\E_\th\frac Tn=\frac\al{\al+\be}=\frac{c+\th}{2c}$ increases from $0$ to $1$ as $\th$ increases from $-c$ to $c$ and (ii) the over-dispersion $\Var_\th\frac Tn-\Var_\th\E_\th(\frac Tn|p)=\frac{n-1}{n}\,\frac{\al\be}{(\al+\be)^2(\al+\be+1)}=\frac{n-1}{n}\,\frac{c^2-\th^2}{4c^2(2c+1)}<\frac1{4(2c+1)}$ of $\frac Tn$ is small (uniformly in $\th$ and $n$) when $c$ is large. 
Using a reasoning similar to that in Example~\ref{ex:bern}, one comes to the same conclusion as there, that a statistic $S$ is a UMVUE here iff for some function $u\colon\R\to\R$ one has $S=u\circ T$. To obtain this conclusion, in this case  one only has to verify that the likelihood functions $\ell_x$ corresponding to samples $x$ with pairwise distinct values $T(x)$ are linearly independent. But this follows (by the strict total positivity property of the function $\R^2\ni(x,y)\mapsto e^{x y}$ and the P\'olya--Szeg\"o extension of the Cauchy--Binet formula for determinants) from the representation $\ell_x(\th)=\P_\th(\{x\})=K\big(T(x),\th\big)$ for all $\th\in\Th$ and $x\in X$, where  
\begin{equation*}
	K\big(t,\th\big):=
	\int_0^1 \exp\Big(t\,\ln\frac p{1-p}\Big)\exp\Big(\th\,\ln\frac p{1-p}\Big)\,\mu(d p)  
\end{equation*}
and $\mu(d p):=p^{c-1}(1-p)^{n+c-1}\,dp$;  
cf.\ e.g.\ the paragraph containing formula~(3.6) on page~12 in \cite{karlin-studden} or pages~16--17 in \cite{karlin_total-pos}.   
\end{example}


Corollaries~\ref{cor:u(T)} and \ref{cor:complete} follow as well from the mentioned Theorem~5 of Bahadur~\cite{bahadur57}, which also states that the mentioned sigma-algebra $\Si_0$ is 
complete. 

The method by which Theorem~\ref{prop:iff} and Corollaries~\ref{cor:u(T)} and \ref{cor:complete} were obtained appears to be very different from that in \ref{cor:complete}, where 
the main, and ingenious, idea was to apply implication \eqref{eq:implies} repeatedly and then use interpolation/approximation properties of polynomials. Let us present this idea here to provide the following. 

\begin{proof}[Alternative proof of Corollary~\ref{cor:complete}]
Since $T$ is a UMVUE, 
applying implication 
\eqref{eq:implies} repeatedly yields $T^2H=T(TH)\in\EE_0$, $T^3H=T(T^2H)\in\EE_0$, and so on, for any $H\in\EE_0$. Since $\EE_0$ is a linear space, it follows that 
$(f\circ T)H\in\EE_0$ for any polynomial $f$ over $\R$ and any $H\in\EE_0$. Moreover, since the set $X$ is finite, for any function $u\colon X\to\R$ there is a polynomial $f$ over $\R$ such that $u\circ T=f\circ T$. So, 
\begin{equation}\label{eq:*}
\text{$(u\circ T)H\in\EE_0$ for any function $u\colon X\to\R$ and any $H\in\EE_0$. }	
\end{equation}
In particular, if $u\circ T\in\EE_0$, then $(u\circ T)^2\in\EE_0$, that is, $\E_\th(u\circ T)^2=0$ for all $\th\in\Th$, whence $u\circ T=0$ except on a null set. Thus, $T$ is complete. 
\end{proof}

It is not clear to me if the method of \cite{bahadur57} can be used to obtain Theorem~\ref{prop:iff} of the present note. 

\bigskip

The notion of the UMVUE, which is optimal with respect to a quadratic loss function, was extended to more general loss functions $\l\colon\Th\times\R\to\R$. A loss function $\l$ is called convex if the function $\R\ni t\mapsto\l(\th,t)\in\R$ is convex for each $\th\in\Th$. 
A statistic $T$ is called a uniformly best unbiased estimator 
with respect to a loss function $\l$ ($\l$-UBUE) 
if for any statistic $S$ such that $S-T\in\EE_0$ one has 
$\E_\th\l(\th,T)\le\E_\th\l(\th,S)$ for all $\th\in\Th$. 
Obviously, if $\l(\th,t)=(t-b(\th))^2$ for some function $b\colon\Th\to\R$ and 
all $\th\in\Th$ and $t\in\R$, then any UMVUE of $b$ is an $\l$-UBUE. 
A statistic $T$ is called universally uniformly best unbiased estimator (UUBUE) if it is $\l$-UBUE  
for all convex loss functions $\l$. 

If the consideration is reduced only to statistics $T$ with a given expectation function $b$, then of interest may be the set, say $\C$, of all loss functions of the form $\l_\ll$, where $\ll$ is any differentiable strictly convex function from $\R$ to $\R$ and 
$\l_\ll(\th,t)=\ll(t)$ for all $(\th,t)\in\Th\times\R$, so that $\l_\ll(\th,t)$ does not depend on $\th$.   
The set $\C$ of loss functions was considered in \cite{schmett-strasser}. 
It is easy to see that a statistic $T$ is a UMVUE iff $T$ is an $\l_{\sq}$-UBUE, where $\sq$ is the square function given by the formula $\sq(t)=t^2$ for $t\in\R$; also, clearly $\l_{\sq}\in\C$. 

\begin{proposition}\label{prop:univ}
Take any statistic $T$ and any loss function $\l\in\C$. Then 
$T$ is a UMVUE iff $T$ is an $\l$-UBUE iff $T$ is UUBUE. 
\end{proposition}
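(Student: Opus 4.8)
The plan is to establish the cycle of implications: if $T$ is a UMVUE then $T$ is UUBUE; if $T$ is UUBUE then $T$ is $\l$-UBUE for the given $\l\in\C$; and if $T$ is $\l$-UBUE then $T$ is a UMVUE. The two substantive directions rest on the two characterizations already available, namely the multiplicative property \eqref{eq:*} of UMVUEs, $(u\circ T)H\in\EE_0$ for every $u$ and every $H\in\EE_0$, and the equivalent form \eqref{eq:implies} of the UMVUE property.

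For the direction from UMVUE to UUBUE I would fix an arbitrary convex loss $\l$, a statistic $S$ with $H:=S-T\in\EE_0$, and a parameter $\th$. Since $t\mapsto\l(\th,t)$ is a finite convex function on $\R$, it admits a subgradient $g_\th(t)$ at each point; choosing one at each value $t=T(x)$ and applying the subgradient inequality pointwise on the finite set $X$ gives $\l(\th,S)\ge\l(\th,T)+(g_\th\circ T)\,H$, and taking $\E_\th$ yields $\E_\th\l(\th,S)\ge\E_\th\l(\th,T)+\E_\th[(g_\th\circ T)H]$. Because $g_\th\circ T$ is of the form $u\circ T$, the property \eqref{eq:*} gives $(g_\th\circ T)H\in\EE_0$, so the cross term $\E_\th[(g_\th\circ T)H]$ vanishes and $\E_\th\l(\th,S)\ge\E_\th\l(\th,T)$ for every $\th$. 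As $\l$ was arbitrary, $T$ is UUBUE. The passage from UUBUE to $\l$-UBUE is immediate, since every $\l_\ll\in\C$ is (strictly) convex.

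For the remaining direction, write $\l=\l_\ll$ with $\ll$ differentiable and strictly convex. For any $H\in\EE_0$ and $\epsilon\in\R$ one has $\epsilon H\in\EE_0$, so the $\l$-UBUE property forces $\epsilon\mapsto\E_\th\ll(T+\epsilon H)$ to attain its minimum at $\epsilon=0$; differentiating this finite sum gives the first-order condition $\E_\th[(\ll'\circ T)H]=0$ for all $\th$, that is, $(\ll'\circ T)H\in\EE_0$. Writing $g:=\ll'\circ T$ and iterating exactly as in the alternative proof of Corollary~\ref{cor:complete} yields $g^kH\in\EE_0$ for every $k$, hence $(f\circ g)H\in\EE_0$ for every polynomial $f$, and then, since $X$ is finite, $(w\circ g)H\in\EE_0$ for every function $w$ on the range of $g$.

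The crux is to recover $T$ itself from $g=\ll'\circ T$, and here strict convexity is decisive: $\ll'$ is strictly increasing, hence injective, so $T=w\circ g$ with $w=(\ll')^{-1}$ on the range of $\ll'$. Therefore $TH=(w\circ g)H\in\EE_0$ for every $H\in\EE_0$, which by \eqref{eq:implies} is precisely the assertion that $T$ is a UMVUE. I expect this final step to be the main obstacle: the return from optimality with respect to a single strictly convex $\ll$ to the quadratic case hinges on combining the injectivity supplied by strict convexity with the polynomial-interpolation trick afforded by the finiteness of $X$. Both hypotheses built into $\C$ are genuinely used here, differentiability to obtain the first-order condition and strict convexity to obtain the injectivity of $\ll'$.
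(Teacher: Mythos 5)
Your proof is correct, and it rests on the same two pillars as the paper's own argument --- the Bahadur-type iteration combined with polynomial interpolation on the finite set $X$, and the strict monotonicity (hence injectivity) of $\ll'$ --- but it takes a noticeably different route in two places. For the implication UMVUE $\implies$ UUBUE, the paper uses \eqref{eq:*} to deduce $\E_\th(S|T)=T$ except on a null set and then applies Jensen's inequality for conditional expectations; you instead apply the subgradient inequality pointwise on $X$ and use \eqref{eq:*} only to annihilate the cross term $\E_\th[(g_\th\circ T)H]$. The two mechanisms are equivalent in substance, but yours avoids conditional expectation altogether, and the $\th$-dependence of your chosen subgradient $g_\th$ is harmless, since $(g_\th\circ T)H\in\EE_0$ gives a vanishing expectation under every parameter, in particular under the fixed $\th$. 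For the substantive converse, the paper proves $\l$-UBUE $\implies$ UUBUE ``similarly,'' delegating the details to \cite{schmett-strasser} and merely indicating that one iterates the implication $H\in\EE_0\implies(\ll'\circ T)H\in\EE_0$; you instead close the cycle through $\l$-UBUE $\implies$ UMVUE, which is more economical because it reuses the already-proved first implication, and you make explicit what the paper leaves implicit: the first-order condition $(\ll'\circ T)H\in\EE_0$ obtained by differentiating $\epsilon\mapsto\E_\th\ll(T+\epsilon H)$ at its minimum $\epsilon=0$ (legitimate since the expectation is a finite sum and $\ll$ is differentiable), the iteration and interpolation giving $(w\circ\ll'\circ T)H\in\EE_0$ for arbitrary $w$, and the recovery $T=(\ll')^{-1}\circ(\ll'\circ T)$, whence $TH\in\EE_0$ and the characterization \eqref{eq:implies} applies. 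Both hypotheses built into $\C$ are used exactly where you say they are, and the facts you take for granted (\eqref{eq:implies} itself, and \eqref{eq:*} for a UMVUE) are the same ones the paper treats as known.
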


Proposition~\ref{prop:univ} is known (see e.g.\ \cite{klebanov74} and \cite{schmett-strasser}) and is based mainly on the presented above argument by Bahadur. For readers' convenience, here is 

\begin{proof}[Proof of Proposition~\ref{prop:univ}] 
Take any UMVUE $T$. 
By \eqref{eq:*}, for any $H\in\EE_0$ one has $\E_\th(H|T)=0$ except on a null set. Take any statistic $S$ such that $H:=S-T\in\EE_0$. Then $\E_\th(S|T)=T$ except on a null set. So, for any convex loss function $\l$ and for all $\th\in\Th$ one has 
$\E_\th\l(\th,T)=\E_\th\l\big(\th,\E_\th(S|T)\big)\le\E_\th\l(\th,S)$, by Jensen's inequality. 
Thus, any UMVUE $T$ is a UUBUE. 

That any $\l$-UBUE (for any $\l=\l_\ll\in\C$) is a UUBUE is proved similarly. Here (cf.\ \cite[Proof of Satz~2]{schmett-strasser}) the most significant difference is that instead of \eqref{eq:implies} one repeatedly uses the implication 
$H\in\EE_0\implies(\ll'\circ T)H\in\EE_0$, together with the fact that the derivative $\ll'$ of the differentiable strictly convex function $\ll$ is a strictly increasing, and hence injective, function. 

That any UUBUE is an $\l$-UBUE (for any $\l\in\C$) and, in particular, is a UMVUE is trivial.  
\end{proof}

\begin{remark}
In view of Proposition~\ref{prop:univ}, one can replace the term UMVUE in Theorem~\ref{prop:iff} and Corollaries~\ref{cor:u(T)} and \ref{cor:complete} either by UUBUE or by $\l$-UBUE for any given $\l\in\C$. 
\end{remark}

\begin{proof}[Proof of Theorem~\ref{prop:iff}] 
Let us begin with some 
cleaning of the set 
$\Th$. 
Without loss of generality (w.l.o.g.), the parameter space $\Th$ is finite and the family $\PP=(\P_\th)_{\th\in\Th}$ of probability measures is linearly independent. Indeed, otherwise one can replace the family $\PP$ by any linear basis $(\P_\th)_{\th\in\Th_0}$ of $\PP$, for some $\Th_0\subseteq\Th$. Then $\Th_0$ will be nonempty and finite, since the set $\Th$ is nonempty and the set $X$ is finite. It is not hard to see that this replacement of $(\P_\th)_{\th\in\Th}$ by $(\P_\th)_{\th\in\Th_0}$ will not affect either the UMVUE property or the linear independence. 

Let us now proceed to the proof of the ``only if'' part of Theorem~\ref{prop:iff}. 
The crucial observation here is that, if a UMVUE exists, then, 
after the mentioned cleaning of the parameter space $\Th$, 
the likelihood functions $\ell_x$ corresponding to the possible samples are eigenvectors of an ``artificial'' matrix of Lagrange multipliers, and the values of the UMVUE are eigenvalues of that matrix. 

Let indeed $T$ be a UMVUE of a function $b\colon\Th\to\R$.  
For $x\in\X$ and $\th\in\Th$, introduce the abbreviations 
$$t_x:=T(x)\quad\text{and}\quad p_{\th,x}:=\P_\th(\{x\}).$$
Then
\begin{equation}\label{eq:E}
	\text{$\E_\th T^j=\sum_{x\in X}t_x^j\, p_{\th,x}$ for all $\th\in\Th$ and $j=1,2$. }
\end{equation} 

Fix for a moment any $\th\in\Th$. Then, because $T$ is a UMVUE of $b$ and in view of \eqref{eq:E}, the family $(t_x)_{x\in X}$ of the values of $T$ on $ X$ is a minimizer of 
$$
\sum_{x\in X}\tfrac12\,\tilde t_x^{\,2}\,p_{\th,x}
$$ 
over all families $(\tilde t_x)_{x\in X}$ 
in $\R$ such that 
\begin{equation*}	
\text{$\sum_{x\in X}
\tilde t_x\,p_{\tau,x}=b(\tau)$ for all $\tau\in\Th$. }
\end{equation*}
Therefore and because the family $(p_{\tau,\cdot})_{\tau\in\Th}$ is linearly independent, by the 
Euler--Lagrange multiplier rule (see e.g.\ \cite[page~441]{pourciau}), there exist Lagrange multipliers $\la_{\th,\tau}\in\R$ ($\tau\in\Th$) such that 
\begin{equation}\label{eq:}
	t_x\, p_{\th,x}=\sum_{\tau\in\Th}\la_{\th,\tau}p_{\tau,x},                  
\end{equation}
for all $x\in X$.  
Now unfix $\th\in\Th$. 

Then the system of equations \eqref{eq:} can be rewritten in matrix form: 
\begin{equation}\label{eq:eigen}
	\La \ell_x=t_x\, \ell_x\quad\text{for all }x\in X, 
\end{equation}
where $\La:=[\la_{\th,\tau}\colon\th\in\Th,\tau\in\Th]$. 
Recall that, for each $x\in X$, $\ell_x$ is the corresponding likelihood function, mapping the finite (after the cleaning) set $\Th$ to $\R$, and $\ell_x$ is identified with the corresponding column.   
Thus, \eqref{eq:eigen} means precisely that, for each $x\in X$ with the corresponding nonzero likelihood function $\ell_x$, (i) the column $\ell_x$ is an eigenvector of the square matrix $\La$ of Lagrange multipliers and (ii)  the value $t_x=T(x)$ of the UMVUE $T$ on $x$ is the corresponding eigenvalue of $\La$; this is the mentioned key observation in the proof. 
Now, to complete the proof of the ``only if'' part of Theorem~\ref{prop:iff}, it remains to recall that any family of eigenvectors of a matrix corresponding to pairwise distinct eigenvalues is linearly independent. 

Let us now turn to the proof of the ``if'' part. 
Accordingly, suppose that the  
the union $\bigcup_{t\in T(X)}B_t$ of the bases is linearly independent. 
For each $t\in T(X)$, let $\LL_t$ denote the linear span of the basis $B_t$, so that $V_t$ is a linear subspace of 
the linear space $\R^\Th$ of all functions from $\Th$ into $\R$. 
Let $U:=\sum_{t\in T(X)}\LL_t$, so that $U$ is a linear subspace of $\R^\Th$. Let $W$ be any linear subspace of $\R^\Th$ that complements $U$ to $\R^\Th$; that is, $W$ is such that $U+W=\R^\Th$ and $U\cap W=\{0\}$. Then 
each vector $v\in\R^\Th$ can be uniquely represented in the form $w+\sum_{t\in T(X)}v_t$, where $w\in W$ and $v_t\in\LL_t$ for all $t\in T(X)$. Thus, one  
has a valid definition of a linear operator $M\colon\R^\Th\to\R^\Th$ by the formula 
\begin{equation}
	M\Big(w+\sum_{t\in T(X)}v_t\Big):=\sum_{t\in T(X)}t v_t.  
\end{equation}  
Then, in particular, $M\ell_x=t_x\, \ell_x$ for all $x\in X$, where $t_x:=T(x)$, as before. 
So, letting $\La$ be the matrix of the linear operator $M$,  
one has \eqref{eq:eigen}. 
By the last sentence of the Corollary on page~440 of \cite{pourciau} to the ``Convex Multiplier Rule'', 
it now follows that $T$ is a UMVUE. 
This completes the proof of the ``if'' part and hence that of the entire theorem. 
\end{proof}

\textbf{Acknowledgment. } I am pleased to thank Lutz Mattner for pointing out that Corollary~\ref{cor:complete} is known and drawing my attention to paper \cite{kagan-malin-mattner} and references therein.

\bibliographystyle{abbrv}
\bibliography{C:/Users/ipinelis/Dropbox/mtu/bib_files/citations12.13.12}

\end{document}